\newtheorem{theorem}{Theorem}
\newtheorem{corollary}{Corollary}
\begin{document}

\title{The part-frequency matrices of a partition}
\author{William J. Keith}
\keywords{partitions; partition rank; Glaisher's bijection}
\subjclass[2010]{05A17, 11P83}

\begin{abstract}

A new combinatorial object is introduced, the part-frequency matrix sequence of a partition, which is elementary to describe and is naturally motivated by Glaisher's bijection. We prove results that suggest surprising usefulness for such a simple tool, including the existence of a related statistic that realizes every possible Ramanujan-type congruence for the partition function. To further exhibit its research utility, we give an easy generalization of a theorem of Andrews, Dixit and Yee \cite{ADY} on the mock theta functions.  Throughout, we state a number of observations and questions that can motivate an array of investigations. 

\end{abstract}

\maketitle

\section{Acknowledgements}

This note is an expansion of a short talk given at ``Algebraic Combinatorics and Applications,'' the first annual Kliakhandler Conference at Michigan Technological University.  The author would like to thank Igor Kliakhandler for his generous support of mathematics at Michigan Tech and Vladimir Tonchev for organizing the 2015 Conference.

The anonymous referee is also appreciated for a sharp eye contributing clarity to the presentation in several places.

\section{Introduction}

A nonincreasing sequence of positive integers $(\lambda = (\lambda_1, \lambda_2, \dots , \lambda_i)$ such that $\sum \lambda_j = n$ is a partition of $n$, denoted $\lambda \vdash n$.  We say that $n$ is the weight of the partition, or write $\vert \lambda \vert = n$.  Denote the number of partitions of $n$ by $p(n)$.  Their generating function is $$P(n) := \sum_{n=0}^\infty p(n) q^n = \prod_{i=1}^\infty \frac{1}{1-q^i}.$$

One of the first and most useful theorems learned by a student of partition theory is Euler's result that the number of partitions of $n$ into odd parts (e.g., $3+1$ or $1+1+1+1$ for $n=4$) is equal to the number of partitions of $n$ into distinct parts (4 or $3+1$).  This enumeration theorem, interpreted in generating functions, gives the $q$-series identity $$ \prod_{k=1}^\infty \frac{1}{1-q^{2k-1}} = \prod_{k=1}^\infty (1+q^k) .$$  Glaisher's bijection gives a direct mapping between the two sets: if in a partition into odd parts the part $i$ appears $a_{i,0} 2^0 + a_{i,1} 2^1 + \dots $ times, $a_{i,\ell} \in \{0,1\}$, then construct the distinct parts $a_{i,\ell} \, i \cdot 2^\ell$.  The reverse map collects parts with common odd factor $i$.  The identity and Glaisher's map both generalize in the obvious way to partitions into parts not divisible by $m$ and partitions in which parts appear fewer than $m$ times.

Observe that for each $m$, Glaisher's map can be extended to an involution on all partitions: construct a list of matrices $M_j$ indexed by the numbers $j$ not divisible by $m$.  Enumerate columns and rows starting with 0.  If the part $N = j m^k$ appears $a_{N,0} m^0 + a_{N,1} m^1 + a_{N,2} m^2 + \dots$ times, then assign row $k$ of matrix $M_j$ to be $$\begin{matrix} a_{k,1} & a_{k,2} & a_{k,3} & \dots \end{matrix} \quad := \quad \begin{matrix} a_{N,0} & a_{N,1} & a_{N,2} & \dots \end{matrix}.$$   For example, if $m=2$, the partition $(20,5,5,4,2,2,1,1,1,1,1)$ of 43 would be depicted (with all other entries 0):

\begin{center} \begin{tabular}{ccc} $\begin{array}{c|ccc} 1 & & & \\ \hline & 1 & 0 & 1 \\ & 0 & 1 & 0 \\ & 1 & 0 & 0 \end{array}$ & $\begin{array}{c|ccc} 3 & & & \\ \hline & 0 & 0 & 0 \\ & 0 & 0 & 0 \\ & 0 & 0 & 0 \end{array}$ & $\begin{array}{c|ccc} 5 & & & \\ \hline & 0 & 1 & 0 \\ & 0 & 0 & 0 \\ & 1 & 0 & 0 \end{array}$ . \end{tabular} \end{center}

Now it is easy to see that Glaisher's map is simply transposition of this sequence of matrices, restricted to partitions into odd or distinct parts, where either the first row or the first column are the only nonzero entries.

Once these matrices are built, a wide array of transformations suggest themselves.  We will consider several in this paper.  The crucial observation is that \emph{any} bijection which preserves the sums in the SW-NE antidiagonals of the matrices will preserve the weights of the partitions under consideration.  For an object so easily related to such a classical theorem in partitions, this simple idea seems to have a surprising range of possible applications.

In section \ref{OrbitSection} we consider the action of rotating antidiagonals through their length, and in doing so find several theorems including new statistics that realize any of the known congruences for the partition function, including Ramanujan's congruences mod 5, 7, and 11.  In section \ref{MockSection} we generalize a theorem from Andrews', Dixit's and Yee's paper \cite{ADY} related to $q$-series identities concerning partitions into odd and even parts which arises from third order mock theta functions.  Throughout, we mention questions of possible research interest which arise in connection with these ideas.

\section{A congruence statistic: the orbit size}\label{OrbitSection}

A second famous result in partition theory is the set of congruences for the partition function observed by Ramanujan, the first examples of which are 

\begin{align*} p(5n+4) &\equiv 0 \pmod{5} \\p(7n+5) &\equiv 0 \pmod{7} \\ p(11n+6) &\equiv 0 \pmod{11}.
\end{align*}

In fact for any $\alpha, \beta, \gamma \geq 0$, there is a $\delta$ such that $$p(5^\alpha 7^{\lfloor \frac{\beta+1}{2}\rfloor} 11^\gamma n + \delta) \equiv 0 \pmod{5^\alpha 7^\beta 11^\gamma}.$$

The first two congruences can be combinatorially realized by Dyson's \emph{rank} of a partition, which is simply the largest part minus the number of parts.  The number of partitions $\lambda$ of $5n+4$ is divided into five equal classes of size $\frac{p(5n+4)}{5}$ by the residue mod 5 of the rank of $\lambda$, and likewise seven classes by the residue of $7n+6$.

A more important statistic, the \emph{crank}, realizes all the congruences listed, and all of the other known congruences for the partition function, although the classes so constructed divide $p(An+B)$ into classes of possibly unequal sizes divisible by $C$ for the various progressions $p(An+B) \equiv 0 \pmod{C}$.  The crank has proven to be of great use in partition theory (see for instance \cite{Crank1}, \cite{Crank2}, \cite{Crank3} for just a few papers employing and illuminating this statistic).  Although combinatorially defined, proofs for properties of the crank typically require and illuminate deep and beautiful properties of modular forms.

We now claim that associated to the sequence of part-frequency matrices of partitions is a class of natural combinatorial statistics which realizes not only every known congruence but indeed every possible congruence for the partition function.

To construct this statistic we turn to a natural transformation on the part-frequency matrix: rotating the antidiagonals.  To be precise, given modulus $m$ and describing any part size as $j m^k$ with $m \, \nmid j$, define the map $\rho$ on a partition which, operating on the part-frequency matrices $(a_{i,\ell})$, yields $$\left(\rho(M_j)\right)_{i,\ell} = a_{i-1,\ell+1} \text{ for } i>1 \, \text{ and } \left(\rho(M_j)\right)_{1,\ell} =  a_{\ell,1}.$$

That is, we move each entry down and left 1 place, and the lowest entry in an antidiagonal is moved to the upper end.

When this action is applied to partitions in which parts appear fewer than $m$ times, i.e. nonzero entries only appear in the first column, all nonzero entries rotate to the first row.  Thus, this map still contains Glaisher's bijection as a special case.

Our interest now is in the size of the orbits among partitions produced by this action.  For example, let $m=2$ and consider the partition $(20,5,5,4,2,2,1,1,1,1,1)$ illustrated in the introduction.  We see that $M_1$, $M_3$ and $M_j$ for $j\geq 7$ are unchanged by this operation.  In the case of $M_1$, all antidiagonals are ``full'' in that if any entry in the antidiagonal is 1, all are, while the others are empty.  However, $M_5$ is altered by this operation, with an orbit of length 6.  Its images are, in order: 

\phantom{.}

\begin{tabular}{ccc}  $\begin{array}{c|ccc} 5 & & & \\ \hline & 0 & 1 & 0 \\ & 0 & 0 & 0 \\ & 1 & 0 & 0 \end{array}$ & $\begin{array}{c|ccc} 5 & & & \\ \hline & 0 & 0 & 1 \\ & 1 & 0 & 0 \\ & 0 & 0 & 0 \end{array}$ & $\begin{array}{c|ccc} 5 & & & \\ \hline & 0 & 1 & 0 \\ & 0 & 1 & 0 \\ & 0 & 0 & 0 \end{array}$ \\ \quad \quad $\begin{array}{c|ccc} 5 & & & \\ \hline & 0 & 0 & 0 \\ & 1 & 0 & 0 \\ & 1 & 0 & 0 \end{array}$ & \quad \quad $\begin{array}{c|ccc} 5 & & & \\ \hline & 0 & 1 & 1 \\ & 0 & 0 & 0 \\ & 0 & 0 & 0 \end{array}$ & \quad \quad $\begin{array}{c|ccc} 5 & & & \\ \hline & 0 & 0 & 0 \\ & 1 & 1 & 0 \\ & 0 & 0 & 0 \end{array}$ \end{tabular}

\phantom{.}

\noindent Thus, overall, the partition $(20,5,5,4,2,2,1,1,1,1,1)$ is part of an orbit of size 6.

\phantom{.}

One immediately observes that information about the sizes of the orbits of this action will give us information about $p(n)$.  In particular, denote by $p_{\rho,m}(n,k)$ the number of partitions of $n$ that lie in orbits of size $k$ under the action of $\rho$ with modulus $m$, and by $o_{\rho,m}(n,k)$ the number of orbits of size $k$.  We then have that

\begin{theorem}\label{CongruencePtns} Given any partition congruence $p(An+B) \equiv 0 \pmod{C}$, if $B < A$ and $A \vert m$, then $p_{\rho,m}(An+B,k) \equiv 0 \pmod{C}$ for all $k$.
\end{theorem}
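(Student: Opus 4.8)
The plan is to exploit the single structural fact behind the whole construction: since $\rho$ permutes the entries within each SW--NE antidiagonal it preserves every antidiagonal sum and hence the weight, so $\rho$ is a weight--preserving bijection and restricts to a permutation of the finite set of partitions of $N:=An+B$. Its orbits are exactly the cycles of this permutation, and an orbit of size $k$ contains exactly $k$ partitions; thus $p_{\rho,m}(N,k)=k\,o_{\rho,m}(N,k)$ and $p(N)=\sum_{k\ge1}p_{\rho,m}(N,k)$. The goal is to show $p_{\rho,m}(N,k)\equiv0\pmod C$ for every $k$, and the arithmetic hypotheses $A\mid m$ and $B<A$ will be used to keep all auxiliary weights inside the progression $\{N'\equiv B\pmod A\}$, so that the given congruence $p(An'+B)\equiv0\pmod C$ may be invoked at every $n'$.

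First I would separate the corner entries (the antidiagonal $d=0$ of each $M_j$, which $\rho$ fixes) from the entries with $d\ge1$. Because $A\mid m$, the weight contributed by an entry on antidiagonal $d$ is a multiple of $m^{d}$, hence of $A$ whenever $d\ge1$; consequently the residue of the weight modulo $A$ is carried entirely by the corners, and any modification confined to the antidiagonals with $d\ge1$ moves the weight by a multiple of $A$ and so stays in the same progression. I would then induct on $n$. For $k\ge2$ an orbit of size $k$ must contain a non--constant antidiagonal with $d\ge1$; peeling off that antidiagonal's content removes a positive multiple of $A$ from the weight and should identify the size--$k$ orbits of $N$ with orbit data of a strictly smaller $N'=An'+B$ in the same progression. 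By the inductive hypothesis those counts are $\equiv0\pmod C$, giving $p_{\rho,m}(N,k)\equiv0\pmod C$ for all $k\ge2$.

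The induction then closes through the fixed points: $p_{\rho,m}(N,1)=p(N)-\sum_{k\ge2}p_{\rho,m}(N,k)\equiv0-0\pmod C$, where the congruence for the full term $p(N)$ is the hypothesis at $n$ itself, and the base cases (small $n$, where $B<A\le m$ forces every frequency to be $<m$, so that all data sit in corners and every partition is $\rho$--fixed) reduce directly to $p(N)\equiv0$. The hard part will be the peeling correspondence of the previous step: the size of an orbit is the least common multiple of the rotation periods of the individual antidiagonals across all of the matrices $M_j$, so deleting one antidiagonal's content changes that least common multiple, and one must make the map well defined and control its fibres uniformly in $j$ and $d$ while respecting the base--$m$ carrying that links the rows of each $M_j$. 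A cleaner route around this obstacle may be to pass to the fixed--point counts $F(t):=\#\{\lambda\vdash N:\rho^{t}\lambda=\lambda\}$, note that $p_{\rho,m}(N,k)=\sum_{d\mid k}\mu(k/d)F(d)$ by M\"obius inversion over the divisibility lattice, and reduce the theorem to the single assertion that the part of each $F(t)$ supported on weights $\equiv B\pmod A$ is $\equiv0\pmod C$; this last statement equals the hypothesis when $t$ is a multiple of the order of $\rho$, and the inductive argument above is what is needed to propagate it to all $t$.
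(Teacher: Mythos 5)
Your structural observations are correct (weight preservation under $\rho$, separating corners from antidiagonals with $d\ge1$, non-corner content carrying weight in multiples of $m$ and hence of $A$), but the proof has a genuine gap at exactly the point you flag as ``the hard part,'' and neither of your two routes closes it. In the peeling induction, the inductive hypothesis controls $p_{\rho,m}(N',k')$ --- the number of partitions of $N'$ of orbit size $k'$, with no further constraint --- whereas the peeling map produces a \emph{constrained} count: partitions of $N'$ in which one designated antidiagonal is empty and whose remaining periods $k_{\mathrm{rest}}$ satisfy $\mathrm{lcm}(p,k_{\mathrm{rest}})=k$ for the period $p$ of the peeled pattern. These constrained counts are not of the form $p_{\rho,m}(N',k')$, so the inductive hypothesis never applies to them. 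Your M\"obius fallback then reduces the theorem to the congruence for the fixed-point counts $F(t)$, which you propose to prove by ``the inductive argument above'' --- i.e., the same unestablished step, making the argument circular: nowhere do you actually prove any of the relevant counts divisible by $C$, except when $t$ is a multiple of the order of $\rho$ and in the trivial base cases.

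The missing idea, which is the linchpin of the paper's proof, is a lemma about the corner-only partitions (the ``upper-left fillings'': parts not divisible by $m$, each appearing fewer than $m$ times). Their generating function factors as
$$P_{m,m}(q)=\prod_{\substack{k\ge1\\ m\nmid k}}\frac{1-q^{mk}}{1-q^{k}}=\frac{1}{(q;q)_\infty}\cdot\frac{(q^m;q^m)_\infty^2}{(q^{m^2};q^{m^2})_\infty},$$
i.e., as $P(q)$ times a power series supported on powers of $q^m$, giving the recurrence $p_{m,m}(n)=\sum_{\ell\ge0}c(\ell)\,p(n-\ell m)$ and hence $p_{m,m}(n)\equiv0\pmod C$ for every $n\equiv B\pmod A$ (this is where $A\mid m$ and $B<A$ enter). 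With this lemma your decomposition finishes the theorem in one stroke, with no induction and no antidiagonal-by-antidiagonal peeling: freeze the \emph{entire} non-corner configuration at once. Since corners are fixed by $\rho$, the orbit size is determined by that configuration alone; its weight is some $Km$, and the partitions of $N$ carrying it are in bijection with upper-left fillings of $N-Km$. Hence $p_{\rho,m}(N,k)$ is a sum over configurations of orbit size $k$ of terms $p_{m,m}(N-Km)$, every one of which is divisible by $C$. (The same device would also repair your M\"obius route, since each $F(t)$ likewise factors as $P_{m,m}(q)$ times a series in $q^m$ --- but once the whole configuration is frozen, the inversion is unnecessary.)
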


\begin{theorem}\label{CongruenceOrbits} Under the previous hypotheses, $o_{\rho,m}(An+B,k) \equiv 0 \pmod{C}$ for all $k$ also.
\end{theorem}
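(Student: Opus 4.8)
The plan is to deduce Theorem~\ref{CongruenceOrbits} from Theorem~\ref{CongruencePtns} by first recording the elementary relation between the two counting functions and then isolating the genuinely new difficulty. Since $\rho$ preserves weight and acts on the (finitely many nonzero) part-frequency matrices, every partition of $N := An+B$ lies in a $\rho$-orbit of finite size, and an orbit of size $k$ consists of exactly $k$ distinct partitions, all of weight $N$. Consequently
\[
p_{\rho,m}(N,k) \;=\; k\,o_{\rho,m}(N,k).
\]
By Theorem~\ref{CongruencePtns} the left-hand side is divisible by $C$, so $C \mid k\,o_{\rho,m}(N,k)$. When $\gcd(k,C)=1$ this immediately yields $C \mid o_{\rho,m}(N,k)$, disposing of all such $k$ at once. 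The entire content therefore lies in recovering the extra factor $\gcd(k,C)$ when $k$ shares a prime with $C$.

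For that I would work in the following clean model of the action. A partition is the same data as a free choice, for each index $j$ with $m\nmid j$ and each antidiagonal $s=r+c$, of a word $w_{j,s}\in\{0,\dots,m-1\}^{s+1}$ (cofinitely many empty), whose entries are the matrix entries $a_{j,r,c}$ along that antidiagonal; here $\rho$ acts as the \emph{independent} cyclic shift of each $w_{j,s}$, the cell $(j,s)$ contributes weight $\bigl(\sum_{\text{entries}} a\bigr)\,j\,m^{s}$, and the orbit size of the whole partition is the least common multiple of the shift-periods of the $w_{j,s}$. In this language a $\rho$-orbit is a tuple of \emph{necklaces} (one per cell), $o_{\rho,m}(N,k)$ counts such tuples of total weight $N$ whose periods have least common multiple $k$, and $p_{\rho,m}(N,k)$ counts the corresponding tuples of words.

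The key step is then to re-run the argument of Theorem~\ref{CongruencePtns} one level up, on necklaces rather than words. Concretely, I would check that whatever device establishes divisibility by $C$ in Theorem~\ref{CongruencePtns} (a partition of the weight-$N$, orbit-size-$k$ class into blocks of size $C$, or a fixed-point-free $\mathbb{Z}/C$-action on it) can be performed cell-by-cell in the model above, so that it depends only on the underlying necklaces and never on the rotational phase within an orbit. Such a construction automatically commutes with $\rho$, hence descends to a fixed-point-free grouping of the size-$k$ \emph{orbits} into blocks of $C$, giving $C\mid o_{\rho,m}(N,k)$; equivalently, one verifies that $\rho$ and this device together generate a free $\mathbb{Z}/k\times\mathbb{Z}/C$ action on the size-$k$ class, whence $kC\mid p_{\rho,m}(N,k)$.

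I expect the main obstacle to be precisely this descent to orbits, i.e.\ verifying that the induced operation on orbits has no spurious fixed points. This is not automatic: for a lone active cell the relevant partial rotation can coincide with a power of $\rho$ itself, so that the grouping collapses. For instance, with $m=2$ the partitions $(2)$ and $(1,1)$ form a single orbit, whence $o_{\rho,2}(2,2)=1$, not divisible by $2$ — consistent with the theorem, since $N=2$ is not of the form $An+B$ for any genuine partition congruence. Such degeneracies are foreclosed only in aggregate, using the hypothesis $p(An+B)\equiv 0 \pmod C$ together with $B<A$ and $A\mid m$, exactly as in Theorem~\ref{CongruencePtns}. Thus the proposal is to show that the Theorem~\ref{CongruencePtns} construction is intrinsically necklace-level (equivalently, $\rho$-equivariant) and then to re-derive the count with necklaces as the objects; the coprime case above serves as a consistency check, since there the factor $k$ already accounts for the full $\rho$-orbit and the congruence passes directly to $o_{\rho,m}(N,k)$.
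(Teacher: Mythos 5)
Your opening reduction is fine as far as it goes: $p_{\rho,m}(N,k)=k\,o_{\rho,m}(N,k)$ is correct (orbits of size $k$ partition the weight-$N$ partitions lying in them), and it settles $\gcd(k,C)=1$. But the engine you propose for the remaining cases does not exist. The paper's proof of Theorem \ref{CongruencePtns} is not a combinatorial ``device'' -- there is no partition of the size-$k$ class into blocks of size $C$ and no fixed-point-free $\mathbb{Z}/C$-action; it is a pure generating-function recurrence, $p_{m,m}(n)=\sum_\ell p(n-\ell m)c(\ell)$, where divisibility is inherited arithmetically from the hypothesis $p(Ak+B)\equiv 0 \pmod C$. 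So the step ``check that whatever device establishes divisibility can be performed cell-by-cell and descends to necklaces'' has nothing to act on; worse, producing such an elementary witness is precisely what the paper poses as an open problem (Question 1, noted there to imply an elementary proof of Ramanujan's congruences), so your plan quietly assumes a solution to a problem the paper explicitly does not solve. There is also a genuine error in your model: since $\rho$ rotates all antidiagonals \emph{simultaneously} (not independently), a $\rho$-orbit is not the same datum as a tuple of necklaces. Two configurations with identical necklace tuples but different relative phases lie in distinct orbits: with two active cells carrying words $01,01$ versus $01,10$, the necklaces agree but the orbits $\{(01,01),(10,10)\}$ and $\{(01,10),(10,01)\}$ are distinct; in general a necklace tuple with periods $p_1,\dots,p_r$ accounts for $\left(\prod p_i\right)/\operatorname{lcm}(p_i)$ orbits. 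Hence your identification ``$o_{\rho,m}(N,k)$ counts necklace tuples with $\operatorname{lcm}$ of periods $k$'' is false, and any count built on it would miscount.

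The paper's actual proof is one step away from your setup and avoids every obstacle you anticipate. The length-one antidiagonals (the upper-left corners, your $s=0$ cells) are fixed pointwise by $\rho$, so an orbit is exactly a pair: (i) an equivalence class, under simultaneous rotation, of the entries in antidiagonals of length greater than $1$ -- this class alone determines the orbit size $k$, and every entry in it sits at position $i+\ell\geq 1$, so its weight is $Km$ for some integer $K$; and (ii) a free choice of upper-left filling of the remaining weight. Therefore $o_{\rho,m}(n,k)=\sum_{\text{classes}} p_{m,m}(n-Km)$, summed over the (finitely many relevant) outer classes of orbit size $k$. Since $A\mid m$ forces $n-Km\equiv B\pmod A$, each summand is $\equiv 0\pmod C$ by the recurrence argument of Theorem \ref{CongruencePtns}, and the congruence for $o_{\rho,m}$ follows term by term. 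No descent of a $C$-action, no fixed-point analysis, and no aggregate foreclosure of degeneracies is needed: the divisible factor is the $\rho$-fixed corner coordinate, which multiplies orbit counts directly. The fix for your writeup is to isolate that fixed coordinate as a free factor, rather than to transport a nonexistent $C$-grouping across the quotient by $\rho$.
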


Thus, \emph{every} linear congruence for the partition function can be realized by the orbit statistic for at least $m = A$.  For example, the orbit sizes for modulus 5, 7, and 11 realize Ramanujan's congruences, that $p(mn+k) \equiv 0 \pmod{m}$ for $(m,k) \in \{(5,4), (7,5), (11,6)\}$.  It seems interesting, and hopefully useful, that a map which is a small modification of Glaisher's old bijection should have such close connections to the overarching structure of partition congruences now known to exist.

First we prove Theorem \ref{CongruencePtns}.  We will employ the standard notation $$\prod_{i=1}^\infty 1-q^i =: (q;q)_\infty.$$

\noindent \emph{Proof of Theorem \ref{CongruencePtns}.}

Observe partitions into parts not divisible by $m$, appearing fewer than $m$ times.  These are those partitions whose part-frequency matrices are nonzero in no entry other than the upper-leftmost corner.  Refer to such a partition as an \emph{upper-left filling}.  Denote the number of these by $p_{m,m}(n)$ and their generating function by $P_{m,m}(q)$.  This function is 

\begin{multline}P_{m,m}(q) := \sum_{n=0}^\infty p_{m,m} (n) q^n := \prod_{{k=1} \atop {m \nmid k}}^\infty 1+q^k+\dots+q^{(m-1)k} = \prod_{{k=1} \atop {m \nmid k}}^\infty \frac{1-q^{mk}}{1-q^k} \\
= \prod_{{k=1} \atop {m \nmid k}}^\infty \frac{1-q^{mk}}{1-q^k} \prod_{j=1}^\infty \frac{1-q^{mj}}{1-q^{mj}} = \frac{(q^m;q^m)_\infty^2}{(q;q)_\infty (q^{m^2};q^{m^2})_\infty}.
\end{multline}

Thus

$$P_{m,m}(q) = \prod_{i=1}^\infty \frac{1}{1-q^i} \frac{(q^m;q^n)_\infty^2}{(q^{m^2};q^{m^2})_\infty} = \left( \sum_{n=0}^\infty p(n) q^n \right) \left( \sum_{i=0}^\infty c(i) q^{mi} \right)$$

\noindent where the $c(i)$ are the integral coefficients of the power-series expansion of the second factor in the middle term.  Note that the powers of $q$ in this function with nonzero coefficients are all multiples of $m$.  This gives us a recurrence for $p_{m,m} (n)$ in terms of values of the original partition function $p(n-\ell m)$ for various nonnegative integral $\ell$: $$p_{m,m}(n) = p(n)c(0) + p(n-m)c(1) + p(n-2m)c(2) + \dots .$$

If all terms in the progression are divisible by $C$, then $p_{m,m}(n)$ will be as well.  If $n \equiv B \pmod{A}$, $A \vert m$, and $p(Ak+B) \equiv 0 \pmod{C}$ for all $k$, then it will hold that $p(n-km) \equiv 0 \pmod{C}$.

\phantom{.}

\noindent \textbf{Remark 1:} The upper left fillings are precisely the fixed points of Glaisher's bijection.  This makes them a very natural object of interest, and yet it appears that only one class of them has been well-studied: the $m=2$ case of partitions of $n$ into parts both odd and distinct, the number of which are well-known to be of equal parity with partitions of $n$, as they are in bijection with the self-conjugate partitions of $n$.  The generating functions of upper-left fillings are simple $\eta$-quotients, which by work of Treener \cite{Treneer} will possess their own congruences, but of perhaps greater interest is that many of these, and variants in which we permit the divisibility and frequency moduli to be different, appear in connection with McKay-Thompson series \cite{McKay}.  Investigation of these objects thus seems like it could be interesting in its own right.

\phantom{.}

Now observe that we may write partitions in which the orbit size is any stipulated value as an upper left filling plus the presence of various specified sets of fillings of other diagonals.

For instance, suppose $m=5$.  Then any partition consisting of an upper-left filling of some kind, 1s in the entries $a_{0,1}$ and $a_{1,0}$ for $M_1$, and a 3 in the entry $a_{0,1}$ for $M_4$, representing 1s appearing 5 times (plus up to 4 more appearances), a single 5, and a 4 appearing 15 times (plus up to 4 more appearances), and no other entries, will have orbit size 2.  The number of such partitions is just the number of upper-left filling partitions of $n-70$.

This gives us a recurrence for $p_{\rho,m}(n,k)$, the number of partitions of $n$ of orbit size $k$, in terms of the partitions that are upper-left fillings.  Obviously this would be an exceedingly complicated recurrence, but we only require that each of the terms in the recurrence is of the form $p_{m,m} (n-\ell m)$ for some nonnegative integral $\ell $.  Since in the arithmetic progressions $An+B$ we have established that $p_{m,m} (An+B)$ is 0 mod $C$, the numbers $p_{\rho,m}(An+B,k)$ will themselves be 0 mod $C$.

\hfill $\Box$

\phantom{.}

\noindent \emph{Proof of Theorem \ref{CongruenceOrbits}.}  Each orbit is an equivalence class of antidiagonals under the action of rotation.  For instance, one orbit of size 6 is the equivalence class represented by the six $M_5$ matrices in the opening example partition, and the fixed matrices making up the rest of the sequence.  For such an orbit, define the \emph{weight} of the orbit as the sum of the entries in the antidiagonals outside the upper left corner, multiplied by the part-size powers of $m$ and the frequency power of $m$ associated to each element, i.e. the sum $\sum_{i,j} a_{i,j} m^{i j}.$  For the example partition, the weight is 52 -- only the 1 in the upper left corner of $M_1$ is part of the upper-left filling.

For a given orbit, the weight in the antidiagonals outside of the upper left corner is some constant multiple of $m$, say $Km$.  Then for each equivalence class (under rotation) of arrangements of entries in the antidiagonals of length greater than 1, there is one orbit per upper left filling of $n-Km$.  But if $n \equiv B \pmod{A}$, then $n-Km \equiv B \pmod{A}$ since $A \vert m$, and so the upper left fillings of $n-Km$, being defined by some recurrence in $p(n-Km-\ell m)$, also possess the property that their number is divisible by $C$.  Thus, the number of orbits is divisible by $C$. 

\hfill $\Box$

\phantom{.}

\noindent \textbf{Remark 2:} A reader may prudently object that if some sequence $f(n)$, $n \geq 0$  has the property $f(Ak+B) \equiv 0 \pmod{C}$, and a second power series $F(q)$ has integral coefficients, and we consider the coefficients of the generating function $$\sum_{n=0}^\infty \phi(n) q^n = \left( \sum_{n=0}^\infty f(n) q^n \right) F(q^m)$$ \noindent for some $m$ such that $A \vert m$, the coefficients $\phi(n)$ will obviously share the congruence $\phi(Ak+B) \equiv 0 \pmod{C}$ by a similar recurrence argument.  Of course this is true; we are mathematically interested in such a relation when, (a), the coefficients count an easily describable combinatorial object, and, (b), the function $F(q^m)$ arises in a natural way from a simple action on the elements counted by $f(n)$, which finally (c) poses some hope of obtaining further research utility in the future.

\phantom{.}

The first example of the theorems arises from the first Ramanujan congruence $p(5n+4) \equiv 0 \pmod{5}$.  Modulo 5, the number of orbits of given sizes are:

\begin{center}\begin{tabular}{|c|c|c|c|c|}
\hline $5n+4$ & 1 & 2 & 3 & 6 \\
\hline 4 & 5 & 0 & 0 & 0 \\
\hline 9 & 20 & 5 & 0 & 0 \\
\hline 14 & 75 & 30 & 0 & 0 \\
\hline 19 & 220 & 135 & 0 & 0 \\
\hline 24 & 605 & 485 & 0 & 0 \\
\hline 29 & 1480 & 1535 & 5 & 0 \\
\hline 34 & 3470 & 4375 & 20 & 5 \\
\hline 39 & 7620 & 11580 & 75 & 30 \\
\hline
\end{tabular}
\end{center}

(An orbit of size 6 can arise if there are diagonals with period 2 and 3; this will happen sooner than an orbit of size 4.)

It would have been most interesting if the rank or the crank, restricted to classes of partitions with equal orbit sizes, also divided these classes into equinumerous sets; this is not the case, and so there arises the question

\phantom{.}

\noindent \textbf{Question 1.} Does there exist a simple combinatorial statistic on upper left fillings, partitions of orbit size 1, or the orbits of size 1, that divides these sets into classes of equal size, or classes of size divisible by $m$?

\phantom{.}

If this property could be proved in an elementary fashion, we would have a completely elementary proof of the congruences the statistic supported -- something which, to date, is lacking for Ramanujan's congruences.

\phantom{.}

\noindent \textbf{Remark 3:} If one calculates the Andrews-Garvan Dyson crank restricted to partitions in which parts are not divisible by $m$ and in which parts appear fewer than $m$ times, one notices a significant deficiency in the number of partitions with crank 0 mod $m$.  The reason for this is when there are no 1s in the partition, the crank is simply the size of the largest part, and under these restrictions this group never contributes a crank 0 mod $m$.  A modification will have to figure out a way to work around this difficulty.

\phantom{.}

\noindent \textbf{Question 2.} Recent work by Breuer, Eichhorn and Kronholm \cite{Brandt} gives a geometric statistic they label a ``supercrank'' that realizes all congruences for partitions into three parts via the transformations of a cubic fundamental domain in the cone that describes these partitions (with work in progress on partitions into general fixed numbers of parts).  It is probably unlikely that these statistics are intimately related, but any connection between the two families would be fascinating.

\subsection{Generating functions for orbit sizes}

A natural question that arises is the generating function for each orbit size.

Observe that an orbit of size 1 appears only when an antidiagonal, if occupied by anything other than 0, must be filled with copies of the same digit.  Suppose an antidiagonal in $M_j$ is of length $k$.  Then its entries are the elements $a_{i,\ell}$ with $i+\ell=k-1$, so each element represents a contribution of parts totaling $j a_{i,\ell} m^{k-1}$ to the partition.  Thus, a fixed diagonal can be considered as contributing $k$ copies of $j m^{k-1}$ some number of times $a_{i,\ell}$ ranging over $0 \leq a_{i,\ell} < m$.

Thus, if the matrices $M_j$ are modulus $m$, the orbits of size 1 can be described as partitions of $n$ into parts $k \cdot j \cdot m^{k-1}$ appearing fewer than $m$ times:

\begin{multline*}
P_{m}^{(1)} (q) = \prod_{{j: m \nmid j} \atop {k \geq 1}} \left(1+q^{jkm^{k-1}}+q^{2jkm^{k-1}}+ \cdots + q^{(m-1)jkm^{k-1}} \right) \\
= \prod_{{j: m \nmid j} \atop {k \geq 1}} \frac{1-q^{jkm^k}}{1-q^{jkm^{k-1}}} = \prod_{{j: m \nmid j} \atop {k \geq 1}} \frac{1-q^{jkm^k}}{1-q^{jkm^{k-1}}} \cdot \prod_{b,k \geq 1} \frac{1-q^{(bm)km^{k-1}}}{1-q^{(bm)km^{k-1}}} \\
= \prod_{k \geq 1} \frac{\left(q^{km^k};q^{km^k}\right)_{\infty}^2}{\left(q^{km^{k-1}};q^{km^{k-1}}\right)_{\infty}} \cdot \frac{1}{\left(q^{km^{k+1}};q^{km^{k+1}}\right)_{\infty}} \\
= \frac{1}{(q;q)_{\infty}} \prod_{k \geq 1}  \frac{\left(q^{km^k};q^{km^k}\right)_{\infty}^2}{\left(q^{(k+1)m^{k}};q^{(k+1)m^{k}}\right)_{\infty} \left(q^{km^{k+1}};q^{km^{k+1}}\right)_{\infty}}
\end{multline*}

\phantom{.}

\noindent \textbf{Question 3.} This is a somewhat curious generating function.  It is an infinite $\eta$-quotient, most of the factors of which do not cancel. Finite $\eta$-quotients are weakly holomorphic modular forms, but does this fit in any of the usual classes of generating functions?  Since the number of partitions of orbit size 1 is of interest in possible elementary proofs for partition congruences, what can be said -- whether combinatorially, analytically or asymptotically -- about its coefficients?

\phantom{.}

Orbits of size 2 will will contain either a fixed point as in the orbits of size 1, or fillings of every other entry in an antidiagonal with the same entry.  Let the \emph{population} of an antidiagonal denote the number of its nonzero entries.  Every odd population $k$ thus arises in three ways: either as the filled antidiagonal of length $k$, or half of the antidiagonal of length $2k$, interleaved with zeroes.  Those cases in which both halves appear cover the case of even population $k$ filling the antidiagonal of length $k$, so we need add only two more varieties of even population $k$ to cover the case of the two interleaved halves of the antidiagonal of length $4k$.  Finally, we subtract $P_m^{(1)}$ to account for the count of partitions where any occupied halves happen to match.

Letting $a_k = 3$ if $k$ is odd, and $a_k = 2$ if $n$ is even, we have by logic similar to the previous function,

$$P_m^{(2)} (q) = \prod_{k \geq 1} \left( \frac{\left(q^{km^k};q^{km^k}\right)_{\infty}^2}{\left(q^{km^{k-1}};q^{km^{k-1}}\right)_{\infty}} \cdot \frac{1}{\left(q^{km^{k+1}};q^{km^{k+1}}\right)_{\infty}} \right)^{a_k} - P_m^{(1)}(q)$$

The logic is easily extended, noting that for composite orbit sizes there will be an inclusion-exclusion alternating sum of orbit sizes dividing the desired orbit.




\subsection{Orbits of size not divisible by a given modulus}

One may wonder if this construction could yield any useful information on the parity or 3-arity of the partition numbers.  Unfortunately, it seems rather weak for that purpose.  After all, it is already well known that the partition function $p(n)$ is congruent modulo 2 to the number of partitions of $n$ into distinct odd parts, which is just an upper left filling; analysis of additional orbit sizes could hardly be expected to yield better information.

It is easy to observe that $p(n)$ will be congruent modulo 2 or 3 to the number of partitions in orbits of size not divisible by 2 or 3 respectively.  Suppose that we set out to construct such partitions for modulus 2.  Consider the part-frequency matrices for $m=2$.

In each antidiagonal of length $k$, we desire that the period of the antidiagonal under the rotation action be odd.  Let $k = t \cdot 2^{r(k)}$ where $t$ is the largest odd divisor of $k$.  Then we allow orbits of size $t$ or any divisor of $t$, so we build the antidiagonal of length $k$ by choosing entries in periods of size $t$; there will be $2^{r(k)}$ entries in each such period.  We may choose these freely; different choices will yield different partitions.  The $2^{r(k)}$ entries in each periodic section of the antidiagonal will be either 0 or 1 and each entry represents parts totaling $2^{k-1}$, so if nonzero the amount contributed by those entries to the partition will be $2^{r(k)} 2^{k-1}$.

The end result will be a partition with odd orbit size.  We chose parts in the matrices $M_j$, $j=2n-1$.  We can freely choose $i$ of the $t$ available periods, giving $\binom{t}{i}$ possible fillings for each antidiagonal.  We end up with the generating function

\begin{multline*}\prod_{n=1}^\infty \prod_{k=1}^\infty \sum_{i=0}^t \binom{t}{i} q^{(2n-1)2^{k-1} 2^{j(k)} i} = \prod_{n=1}^\infty \prod_{k=1}^\infty (1+q^{(2n-1)2^{k-1} 2^{j(k)}})^t \\
\equiv \prod_{n=1}^\infty \prod_{k=1}^\infty (1+q^{(2n-1)2^{k-1} })^k \pmod{2}.
\end{multline*}

In the last line we used the fact that $(1-q^k)^2 \equiv (1-q^{2k}) \pmod{2}$.  Alas, this is easily seen to be equivalent to partitions into distinct odd parts.  If the experiment is run for general modulus, we can fairly easily obtain the fact that

\begin{theorem}Fix $b > 1$.  Letting $k_m = r+1$ if $b^r \| m$, $$\sum_{n=0}^\infty p(n) q^n \equiv \prod_{m=1}^\infty (1+q^m+q^{2m}+ \cdots + q^{(b-1)m})^{k_m} \pmod{b}.$$  That is, the number of partitions of $n$ is equivalent modulo $b$ to the number of partitions of $n$ in which parts of size $m$ appear at most $b-1$ times in each of $k_m$ types.
\end{theorem}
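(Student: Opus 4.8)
The plan is to prove the stronger statement that the displayed product is \emph{exactly} equal to $\sum_{n} p(n)q^n$ as a formal power series, from which the congruence modulo $b$ is immediate. This route has the advantage of avoiding any Frobenius-type reduction such as $(1-q^k)^b \equiv 1-q^{bk} \pmod{b}$; that identity, used for $b=2$ in the text via $(1-q^k)^2\equiv 1-q^{2k}$, holds only when $b$ is prime (it already fails for $b=4$), whereas the telescoping argument below works uniformly for every $b>1$.

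First I would put each factor in closed form. Since $1+q^m+\cdots+q^{(b-1)m} = \frac{1-q^{bm}}{1-q^m}$, the right-hand side becomes the $\eta$-quotient
$$\prod_{m=1}^\infty \left(\frac{1-q^{bm}}{1-q^m}\right)^{k_m} = \frac{\prod_{m\ge1}(1-q^{bm})^{k_m}}{\prod_{m\ge1}(1-q^m)^{k_m}}.$$
This is a well-defined formal power series, since each factor is $1+O(q^m)$ and only finitely many $m$ affect any given coefficient of $q^n$.

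Next I would compute the net exponent of each fixed factor $(1-q^j)$ in this quotient. In the denominator $(1-q^j)$ occurs with exponent $-k_j$ (the term $m=j$), and in the numerator it occurs with exponent $+k_{j/b}$ precisely when $b\mid j$ (the term $m=j/b$); no other $m$ contributes. Writing $v_b(j)$ for the largest $r$ with $b^r\mid j$, so that $k_j=v_b(j)+1$, the key observation is that $v_b(j/b)=v_b(j)-1$ whenever $b\mid j$, whence $k_{j/b}=k_j-1$. Therefore the net exponent of $(1-q^j)$ is $-k_j+k_{j/b}=-1$ when $b\mid j$, and $-k_j=-1$ when $b\nmid j$ (where $k_j=1$). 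In either case it equals $-1$.

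Consequently the quotient telescopes to $\prod_{j\ge1}(1-q^j)^{-1}=(q;q)_\infty^{-1}=\sum_n p(n)q^n$, giving the theorem as an exact identity; reducing modulo $b$ yields the stated congruence and, as a bonus, shows that the typed partitions described are in fact equinumerous with \emph{all} partitions of $n$. The step most prone to error is the exponent bookkeeping, and the one genuine subtlety is that $v_b$ denotes the largest power of $b$ dividing $j$, which for composite $b$ is \emph{not} the ordinary $p$-adic valuation; once the identity $k_{j/b}=k_j-1$ is recorded and the grouping of the infinite product by $(1-q^j)$ is justified by the finiteness remark, I expect no real obstacle. The main conceptual pitfall to avoid is mimicking the $b=2$ orbit computation, since its appeal to the freshman's dream is special to prime moduli.
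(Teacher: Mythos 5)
Your proof is correct, and it takes a genuinely different route from the paper's. The paper obtains this theorem as the culmination of its orbit analysis: $p(n)$ is congruent modulo $b$ to the number of partitions lying in orbits of size not divisible by $b$ under the rotation action $\rho$, such partitions are enumerated by freely filling the period classes of each antidiagonal (an antidiagonal of length $k = t\,b^{r(k)}$ contributing $t$ free choices), and the resulting product is then collapsed using the freshman's dream --- carried out explicitly only for $b=2$ via $(1-q^k)^2 \equiv 1-q^{2k} \pmod{2}$, with the general case waved through as ``the experiment run for general modulus.'' You instead prove the strictly stronger statement that the product equals $(q;q)_\infty^{-1}$ \emph{exactly}, by writing each factor as $\bigl(\frac{1-q^{bm}}{1-q^m}\bigr)^{k_m}$ and checking that the net exponent of every $(1-q^j)$ is $-1$; your bookkeeping $k_{j/b} = k_j - 1$ is correct even for composite $b$ (where $b^r \| j$ is not a $p$-adic valuation), and the regrouping of the infinite product is legitimate formally since only finitely many factors touch any coefficient. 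Your route buys three things: it is shorter; it yields exact equality rather than a congruence (so the typed partitions are \emph{equinumerous} with all partitions of $n$, generalizing precisely the $b=2$ identity the paper itself records in Remark 4, $\frac{1}{(q;q)_\infty} = \prod (1+q^n)^{a_n}$); and it handles composite $b$ uniformly, which is a real point, since the paper's sketched reduction leans on the freshman's dream (prime-specific) and, for composite $b$, even the orbit-size step is delicate because an orbit size not divisible by $b$ can still share factors with $b$. What the paper's approach buys in exchange is combinatorial meaning: it explains where the exponents $k_m$ come from (one ``type'' per power of $b$ in the periodic structure of the antidiagonals) and ties the congruence to the orbit statistic that is the subject of the section, whereas your telescoping, though cleaner, treats the product as a formal $\eta$-quotient divorced from the part-frequency-matrix picture.
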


\phantom{.}

\noindent \textbf{Remark 4:} If we wish to consider parts appearing at most once, and exponents only 0, 1, or 2, we may consider the fact that $\frac{1}{(q;q)_\infty} = \prod_{n=1}^\infty (1+q^n)^{a_n}$ in which $a_n$ is the power of 2 that divides $n$, plus 1.  Reducing products modulo 3 we obtain $$\sum_{n=1}^\infty p(n) q^n \equiv \prod_{n=1}^\infty (1+q^n)^{a_n} \pmod{3}$$ \noindent in which, if $n = 2^r 3^k m$, $gcd(m,6)=1$, then $a_n$ is the residue modulo 3 of the $k$-th iteration of the map starting from 0th iteration $r+1$ and, at each successive iteration, adds the floor of the previous exponent, divided by 3, to the residue of the previous exponent mod 3. 

\section{Generalization of a theorem related to third-order mock theta functions}\label{MockSection}

We next illustrate the utility of part-frequency matrices for proving combinatorial identities of suitable type.  In this section and the succeeding we employ the standard notation $$(a;q)_n = (1-a)(1-aq)\dots (1-aq^{n-1}) \quad , \quad (a;q)_\infty = \prod_{k=0}^\infty (1-aq^k).$$

A classical collection of objects in partition theory is Ramanujan's mock theta functions, which have spurred a phenomenal amount of work \cite{MockTheta}.

In \cite{ADY} Andrews, Dixit and Yee consider combinatorial interpretations of $q$-series identities related to several of the third order mock theta functions.  Most of these involve partitions into odd or distinct parts with restrictions on the sizes of odd or even parts appearing.  One such identity is (Theorem 3.4 in \cite{ADY})

\begin{theorem}\label{34Orig} $$\sum_{n=1}^\infty \frac{q^n}{(q^{n+1};q)_n (q^{2n+1};q^2)_\infty} = -1 + (-q;q)_\infty$$
\end{theorem}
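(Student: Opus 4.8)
The plan is to give a partition-theoretic interpretation of both sides and then construct an explicit bijection, which is exactly the kind of task the part-frequency matrices are suited to. First I would identify the combinatorial objects: the right-hand side $-1 + (-q;q)_\infty = \sum_{k\geq 1} (\text{partitions of } n \text{ into distinct parts, at least one part})$ counts partitions of $n$ into distinct parts, with the $-1$ removing the empty partition. For the left-hand side I would expand each summand $\frac{q^n}{(q^{n+1};q)_n (q^{2n+1};q^2)_\infty}$ as a generating function: the factor $\frac{1}{(q^{n+1};q)_n}$ generates partitions into parts from $\{n+1, n+2, \dots, 2n\}$ (allowing repetition), the leading $q^n$ contributes a single mandatory part of size $n$, and $\frac{1}{(q^{2n+1};q^2)_\infty}$ generates partitions into odd parts exceeding $2n$. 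The outer sum over $n$ indexes a distinguished ``pivot'' value $n$ in each object.

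The key step is to recognize that the object indexed by $n$ on the left describes a partition whose smallest part, or largest repeated part, is controlled by $n$, and to reconcile the parts-with-repetition structure of $\frac{1}{(q^{n+1};q)_n}$ with the distinct-parts structure on the right. Here is precisely where I would bring in Glaisher's map as realized by the part-frequency matrices: partitions into distinct parts are the upper-left fillings whose nonzero entries lie in the first row (equivalently, via transposition, the first column), and Glaisher's involution for $m=2$ converts repeated parts into distinct parts by reading off binary expansions of frequencies. I would interpret the left-hand summand as a partition into odd parts (the parts in $\{n+1,\dots,2n\}$ and the odd parts above $2n$, together with the multiplicity structure) subject to the size constraints encoded by $n$, then apply Glaisher to land in distinct parts. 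The role of $n$ as the pivot should translate, under Glaisher, into a statement about the largest part or a gap condition, and summing over $n$ should reassemble all nonempty distinct-part partitions exactly once.

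The hard part will be verifying that the three-factor decomposition on the left, with its specific ranges $\{n+1,\dots,2n\}$ and $\{2n+1, 2n+3, \dots\}$ and the mandatory part $n$, partitions the set of all nonempty distinct-part partitions into disjoint classes indexed by $n$ with no overlap and no omission. In other words, I must exhibit a statistic on distinct-part partitions that recovers $n$ uniquely, so that the Glaisher images for distinct $n$ are genuinely disjoint and exhaustive. The natural candidate is something like: given a distinct-part partition, let $n$ be determined by the position where the parts transition from ``small'' (to be un-Glaishered into a part of size $n$ repeated, plus parts in the middle band) to ``large odd.'' I would verify this is a well-defined bijection by checking that the antidiagonal sums are preserved throughout — which, by the crucial observation of the introduction, guarantees the weights match — and that the map is reversible. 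Once the pivot statistic is pinned down and shown to be a bijection class by class, summing the generating functions over $n$ yields the identity; the generalization the section promises should then follow by replacing the modulus $2$ and the specific bands with their analogues for general $m$.
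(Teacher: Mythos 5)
Your reading of both sides is correct, and your overall strategy --- interpret the identity combinatorially and build a Glaisher-type bijection on part-frequency matrices, indexed by the smallest part $n$ --- is exactly the paper's route: the paper proves the general-$m$ combinatorial statement (smallest part appearing fewer than $m$ times, parts divisible by $m$ bounded by $m$ times the smallest) and then extracts the $q$-series identity, with $m=2$ recovering Theorem \ref{34Orig}. But your proposal stops precisely where the real work begins: you defer everything to an unspecified ``pivot statistic,'' and the candidate you float (the position where parts transition from small to large odd) is not the right one. The observation that makes the proof go, and which is absent from your sketch, is this: write the smallest part as $n = j2^k$ with $j$ odd. For every odd $i < n$ there is \emph{exactly one} exponent $k_1$ with $n < i\,2^{k_1} < 2n$, i.e.\ exactly one part size in the odd class of $i$ lying in the repeatable band $\{n+1,\dots,2n\}$; for odd $i > n$ only $i\,2^0 = i$ itself is admissible (even parts above $2n$ are forbidden); and in the exceptional matrix $M_j$ the repeatable size is $2n = j2^{k+1}$, with $n$ itself occupying row $k$, column $0$, alone. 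So each matrix $M_i$ has a unique marked row, and the left-hand objects are exactly the fillings supported on those rows. The map is then \emph{not} the global Glaisher involution you invoke, but a pivoted transposition: in each $M_i$, exchange the marked row $k_1$ with the leading column from row $k_1$ downward, binary-expanding the multiplicity of the single repeatable size into distinct parts $i2^{k_1}, i2^{k_1+1}, \dots$, all of which exceed $n$.

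Because the pivot sits at or below the marked row, the smallest part is preserved by the map, so the statistic recovering $n$ on the distinct-parts side is simply the smallest part itself; disjointness over $n$ and exhaustiveness are then immediate, and the inverse is the evident refolding of column $0$ of each $M_i$ into the marked row determined by the smallest part. By contrast, unmodified Glaisher applied to the whole partition would in general change the smallest part, and the images over different $n$ would not be disjoint --- your argument would fail at exactly the step you flagged as hard. One smaller error to repair: the left-hand objects are not ``partitions into odd parts''; the band $\{n+1,\dots,2n\}$ and $n$ itself may contain even parts, and what matters is the decomposition of each part as $i2^{k_1}$ with $i$ odd, which is what the part-frequency matrices encode. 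Your appeal to antidiagonal preservation is sound as far as it goes --- the pivoted transposition does preserve weights for that reason --- but without the unique-marked-row structure there is no map for it to certify.
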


\noindent which, interpreted combinatorially, yields (Theorem 3.5 in \cite{ADY})

\begin{corollary} The number of partitions of a positive integer $n$ with unique smallest part in which each even part does not exceed twice the smallest part equals the number of partitions of $n$ into distinct parts.
\end{corollary}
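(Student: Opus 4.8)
\medskip

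\noindent \emph{Proof proposal.} The plan is to prove the Corollary by reading both sides of the identity in Theorem \ref{34Orig} as generating functions and comparing coefficients of $q^N$, where I reserve $N$ for the integer being partitioned and let the outer summation index $n$ play the role of the smallest part. First I would dispose of the right-hand side: since $(-q;q)_\infty = \prod_{k=1}^\infty (1+q^k)$ is the generating function for partitions into distinct parts, with the empty partition accounting for the constant term $1$, the expression $-1+(-q;q)_\infty$ generates exactly the partitions into distinct parts of a \emph{positive} integer. This is the quantity on the right of the claimed equality.

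The substance lies in interpreting the left-hand side term by term, reading $n$ as the value of the smallest part. For fixed $n$, the factor $q^n$ contributes a single part equal to $n$; the factor $1/(q^{n+1};q)_n = \prod_{j=n+1}^{2n} 1/(1-q^j)$ contributes an arbitrary multiset of parts, of either parity, drawn from $\{n+1,\dots,2n\}$; and the factor $1/(q^{2n+1};q^2)_\infty = \prod_{k \geq 0} 1/(1-q^{2n+1+2k})$ contributes an arbitrary multiset of \emph{odd} parts exceeding $2n$. I would then observe that every part other than the designated part $n$ is strictly larger than $n$, so $n$ is the unique smallest part, and that every even part — including the smallest part itself when $n$ is even — is at most $2n$, that is, twice the smallest part. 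Summing over $n \geq 1$ shows that the left-hand side is precisely the generating function for the partitions described in the Corollary, and comparing coefficients with the right-hand side gives the result.

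The one delicate point, and the step I would verify most carefully, is that these three factors cover the admissible parts with neither overlap nor gap and that the parity and boundary conditions come out exactly as stated. In particular I must confirm that $1/(q^{2n+1};q^2)_\infty$ contributes no even part, so that no even part can exceed $2n$; that the boundary value $2n$ is correctly admitted; that the odd parts range freely over all of $(n,\infty)$, split between the middle factor on $(n,2n]$ and the last factor on $(2n,\infty)$; and that no factor can produce a part $\leq n$, which is what forces the smallest part to equal the summation index and to occur only once. Once these bookkeeping checks are in place the equality of the two counts is immediate from Theorem \ref{34Orig}, with no appeal to the theory of mock theta functions; the genuinely new work of the section will then be to feed more general part-frequency data into the same reading in order to obtain a generalization.

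\medskip
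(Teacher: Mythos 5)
Your proposal is correct: the bookkeeping all checks out. The factor $1/(q^{n+1};q)_n = \prod_{j=n+1}^{2n}(1-q^j)^{-1}$ supplies parts of either parity from $\{n+1,\dots,2n\}$, the factor $1/(q^{2n+1};q^2)_\infty$ supplies only odd parts exceeding $2n$ (since $2n+1+2k$ is always odd), no factor produces a part $\leq n$, and the classes indexed by different values of $n$ are disjoint and exhaust all partitions with a unique smallest part and even parts at most twice that part. However, this is the route of Andrews, Dixit and Yee, which the paper compresses into the single phrase ``interpreted combinatorially'': it treats the $q$-series identity of Theorem \ref{34Orig} as given, and that identity is itself the hard content, obtained in \cite{ADY} from the circle of ideas around the third-order mock theta functions. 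The paper's own proof of this statement runs in the opposite direction. It proves the combinatorial assertion directly, for every modulus $m$ at once, by a bijection on part-frequency matrices: with smallest part $jm^k$, in each matrix $M_i$ (with $m \nmid i$) exactly one row $k_1$ satisfying $jm^k < im^{k_1} < jm^{k+1}$ (or row $0$ when $i > jm^k$) can be occupied, and one transposes each matrix from that row downward --- a localized Glaisher transposition --- landing on partitions into parts appearing fewer than $m$ times; the Corollary is the case $m=2$, and the paper then \emph{deduces} a new $m$-parametrized $q$-series identity from the combinatorics rather than the reverse. So your argument buys a quick coefficient-comparison verification conditional on the known identity, with no mock theta theory needed at the point of use, while the paper's buys an explicit bijection, the generalization to all $m$, and a new identity; your closing sentence rightly anticipates that the generalization is where the section's real work lies, but note that in the paper the bijection is the proof, not a supplement to the generating-function reading.
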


Using part-frequency matrices, we can re-prove and generalize this to

\begin{theorem} The number of partitions of $n$ with smallest part appearing fewer than $m$ times, in which each part divisible by $m$ does not exceed $m$ times the smallest part, equals the number of partitions of $n$ into parts appearing fewer than $m$ times.
\end{theorem}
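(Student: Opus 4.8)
The plan is to establish a weight-preserving bijection between the two classes of partitions by decomposing each through its part-frequency matrices with modulus $m$. Let $A_m(n)$ denote the set of partitions of $n$ with smallest part appearing fewer than $m$ times and each part divisible by $m$ not exceeding $m$ times the smallest part, and let $B_m(n)$ denote the partitions of $n$ into parts appearing fewer than $m$ times. Partitions in $B_m(n)$ are exactly those whose part-frequency matrices have all nonzero entries in the first column (frequency exponent $0$), since "part appears fewer than $m$ times'' means no carrying into higher powers of $m$ in the frequency expansion. So the right-hand side is clean to describe matricially.

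\emph{First} I would analyze the left-hand side in the same language. Write the smallest part as $s$ and expand each part size as $j m^k$ with $m \nmid j$. The condition "each part divisible by $m$ does not exceed $m s$'' restricts which rows $k \geq 1$ of each matrix $M_j$ can be populated: a part $j m^k$ with $k \geq 1$ is divisible by $m$, and the constraint $j m^k \leq m s$ forces $j m^{k-1} \leq s$, i.e. the part size divided by $m$ is at most the smallest part. The "smallest part appears fewer than $m$ times'' condition controls the frequency expansion of the row corresponding to $s$ itself. The key observation is that the antidiagonal sums of the matrices are preserved under transposition (this is precisely the content of the crucial observation in the introduction and the mechanism behind Glaisher's map), so \emph{any} rearrangement of entries within an antidiagonal preserves the weight.

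\emph{The core of the argument} is then to show that the matricial data describing $A_m(n)$ and $B_m(n)$ are put into bijection by a transposition-type move on each matrix $M_j$, exactly as Glaisher's bijection transposes the $m$-ary digit information. The smallest-part condition on the $A_m$ side should translate, under this transposition, into the "fewer than $m$ times'' column condition on the $B_m$ side, while the boundedness condition (parts divisible by $m$ capped at $ms$) should become automatically satisfied because transposition moves the frequency exponents into part-size exponents and vice versa. I would verify this by tracking, for each $j$, how a legal matrix for $A_m$ maps under transposition to a matrix whose nonzero entries all lie in the first column, and conversely. Setting $m = 2$ should recover the Andrews--Dixit--Yee corollary: the "each even part does not exceed twice the smallest part'' becomes the $m=2$ instance of the divisibility bound, and "unique smallest part'' is the $m=2$ case of "smallest part appearing fewer than $m$ times.''

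\emph{The main obstacle} I anticipate is handling the interaction between the smallest-part row and the boundedness constraint cleanly. The difficulty is that $s$ is not fixed in advance; it is determined by the partition, and the boundedness condition $j m^k \leq m s$ couples the allowed rows across \emph{all} matrices $M_j$ simultaneously through the single quantity $s$. A naive matrix-by-matrix transposition may not respect this global coupling, so the bijection likely cannot be a blind transposition of every matrix. The plan to overcome this is to stratify both sides by the value of the smallest part $s$ (equivalently, by the location of the lowest populated antidiagonal), apply the transposition only to the portion of the matrices lying on or above the antidiagonal determined by $s$, and check that the resulting object lands in $B_m(n)$ with the correct column structure and that the map is reversible. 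Verifying that this stratified transposition is a well-defined involution-like bijection preserving $n$ on each stratum, and that the strata exhaust both sets, is where the real work lies; the weight preservation itself is immediate from the antidiagonal-sum invariant.
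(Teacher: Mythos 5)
You have reconstructed the paper's strategy in outline --- stratify by the smallest part $s$, apply a partial Glaisher-type transposition, and get weight preservation for free from the antidiagonal invariant --- and you correctly flag the global coupling through $s$ as the crux. But the step you defer as ``where the real work lies'' \emph{is} the paper's entire proof, and the missing idea is concrete: a uniqueness fact that pins the map down. Writing the smallest part as $s = jm^k$ with $m \nmid j$, the paper observes that in each matrix $M_i$ with $i \neq j$ there is \emph{exactly one} permissible row: if $i < jm^k$, the unique $k_1$ with $jm^k < im^{k_1} < jm^{k+1}$ (consecutive rows differ by a factor of $m$, so the interval $(s, ms)$ contains exactly one power), and row $0$ if $i > jm^k$ (since then $im^{k_1} > ms$ for all $k_1 \geq 1$); in $M_j$ itself, row $k$ carries the single digit $a_{k,0}$ (smallest part appearing fewer than $m$ times) and row $k+1$ (part size $ms$, which the bound permits) is free. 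This reduces the left-hand class to ``one populated row per matrix, at a pivot determined by $i$ and $s$,'' and the bijection is then forced: in each $M_i$, exchange row $k_1$ with the initial segment of column $0$ from row $k_1$ downward, i.e. transpose about the pivot cell $(k_1,0)$. Each entry moves within its own antidiagonal, every new part is $\geq im^{k_1} > s$ so the smallest part (hence the stratum) is preserved, and the image has all entries in column $0$. Reversibility also \emph{requires} the uniqueness fact: given a partition into parts appearing fewer than $m$ times with smallest part $s$, the column-$0$ entries of $M_i$ necessarily sit in rows $\geq k_1$ (parts below the pivot row are smaller than $s$), so the inverse pivot is determined and the inverse transposition lands back in the constrained class. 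None of this verification appears in your proposal.

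Moreover, the one concrete specification you do offer in its place would fail: transposing ``the portion of the matrices lying on or above the antidiagonal determined by $s$'' is not the right region, because the cut is a \emph{row} threshold $k_1$ that varies from matrix to matrix (and in $M_j$ is $k+1$, not $k$), not a single antidiagonal; antidiagonals are what the map must move entries \emph{along}, not what it cuts at. Relatedly, your parenthetical ``equivalently, by the location of the lowest populated antidiagonal'' is false: the antidiagonal index of a cell $(i',\ell')$ in $M_j$ tracks $jm^{i'+\ell'}$, conflating part size with frequency power, whereas the smallest part is read off the lowest populated row, $jm^{i'}$. With the per-matrix uniqueness lemma and the row-pivot transposition in place your sketch closes up into exactly the paper's argument; without them the map is not well defined and the inverse cannot be checked.
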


\begin{proof}
Let the smallest part of the partition be $j m^k$, and consider the matrices $M_i$ with modulus $m$.  For any other matrix $M_i$ with $m \nmid i$, if $i < j m^k$, then there is exactly one value of $k_1$ for which $j m^k < i m^{k_1} < j m^{k+1}$.  If $i > j m^k$, then only parts $i m^0$ are allowed under the conditions of the theorem.  Thus, in each matrix $M_i$, there is one and exactly one part size $i m^{k_1}$ permissible, with the exception of $M_j$, in which $j m^k$ appears with nonzero entry $a_{0,jm^k}$ and all other $a_{b,jm^k} = 0$, along with possible entries $a_{b,jm^{k+1}}$.

Now the map is simply a revised form of Glaisher's bijection: we transpose the portions of the matrices $M_i$ starting at row $k_1$, i.e, we exchange the entries in row $k_1$ for entries in the $m^0$ column starting no lower than row $k_1$.

The resulting matrix still has smallest part $j m^k$, and any part larger than this may appear fewer than $m$ times.  Observe that as long as $jm^k$ remains the smallest part, the allowable parts for a partition of the previous type are still identifiable.  In particular, we can take a partition into parts appearing fewer than $m$ times and exchange the $m^0$ column of matrix $M_i$ for the $k_1$ row which makes $j m^k < i m^{k_1} < j m^{k+1}$, or for the 0 row if $i > j m^k$.
\end{proof}

\noindent \textbf{Example:} Let $m=2$ for convenience and say $\lambda = (15,15,15,10,10,8,6)$.  Write the matrices $M_j$ and mark the allowed row:

\begin{center} \begin{tabular}{ccccc} $\begin{array}{c|ccc} 1 & & & \\ \hline & 0 & 0 & 0 \\ & 0 & 0 & 0 \\ & 0 & 0 & 0 \\ * & 1 & 0 & 0 \end{array}$ & $\begin{array}{c|ccc} 3 & & & \\ \hline & 0 & 0 & 0 \\ & 1 & 0 & 0 \\ * & 0 & 0 & 0 \\ & 0 & 0 & 0 \end{array}$ & $\begin{array}{c|ccc} 5 & & & \\ \hline & 0 & 0 & 0 \\ * & 0 & 1 & 0 \\ & 0 & 0 & 0 \\ & 0 & 0 & 0\end{array}$ & $\dots$ & $\begin{array}{c|ccc} 15 & & & \\ \hline * & 1 & 1 & 0 \\ & 0 & 0 & 0 \\ & 0 & 0 & 0 \\ & 0 & 0 & 0 \end{array}$ \end{tabular} \end{center}

Transpose the part of the matrices at and below the marked row.

\begin{center} \begin{tabular}{ccccc} $\begin{array}{c|ccc} 1 & & & \\ \hline & 0 & 0 & 0 \\ & 0 & 0 & 0 \\ & 0 & 0 & 0 \\ * & 1 & 0 & 0 \end{array}$ & $\begin{array}{c|ccc} 3 & & & \\ \hline & 0 & 0 & 0 \\ & 1 & 0 & 0 \\ * & 0 & 0 & 0 \\ & 0 & 0 & 0 \end{array}$ & $\begin{array}{c|ccc} 5 & & & \\ \hline & 0 & 0 & 0 \\ * & 0 & 0 & 0 \\ & 1 & 0 & 0 \\ & 0 & 0 & 0\end{array}$ & $\dots$ & $\begin{array}{c|ccc} 15 & & & \\ \hline * & 1 & 0 & 0 \\ & 1 & 0 & 0 \\ & 0 & 0 & 0 \\ & 0 & 0 & 0 \end{array}$ \end{tabular} \end{center}

\phantom{.}

We have the refined identity
$$\frac{(q^{mn};q^m)_\infty}{(q^{n+1};q)_{(m-1)n} (q^{mn};q)_\infty} = \frac{(q^{m(n+1)};q^m)_\infty}{(q^{n+1};q)_\infty}$$

\noindent which, multiplied on both sides by $\frac{q^n(1-q^{(m-1)n})}{1-q^n}$ and summed over all $n$ save the exceptional case $n=0$, gives the additionally parametrized $q$-series identity

\begin{corollary}
$$\sum_{n=1}^\infty \frac{q^n(1-q^{(m-1)n})(q^{mn};q^m)_\infty}{(1-q^n)(q^{n+1};q)_{(m-1)n} (q^{mn};q)_\infty} = -1 + \frac{(q^m;q^m)_\infty}{(q;q)_\infty}$$
\end{corollary}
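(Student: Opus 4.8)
The plan is to read the Corollary off the refined identity
$$\frac{(q^{mn};q^m)_\infty}{(q^{n+1};q)_{(m-1)n}(q^{mn};q)_\infty} = \frac{(q^{m(n+1)};q^m)_\infty}{(q^{n+1};q)_\infty}$$
by performing the prescribed multiplication and summation and recognizing the resulting series as a telescoping sum. First I would confirm that the refined identity is a bona fide $q$-series identity independent of any convergence issues: writing $(q^{n+1};q)_{(m-1)n} = \prod_{k=n+1}^{mn}(1-q^k)$ and clearing denominators, it collapses to the elementary product relation $(1-q^{mn})\,(q^{n+1};q)_\infty = (q^{n+1};q)_{(m-1)n}\,(q^{mn};q)_\infty$, which is immediate on comparing the two sides factor by factor. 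Combinatorially it is the bijection of the preceding theorem restricted to partitions whose smallest part equals the fixed value $n$.

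The central device I would introduce is the one-parameter family
$$G(n) := \frac{(q^{mn};q^m)_\infty}{(q^n;q)_\infty},$$
so that the right-hand side of the refined identity is exactly $G(n+1)$. Peeling off the bottom factor of the numerator and of the denominator gives the ratio relation $G(n) = \frac{1-q^{mn}}{1-q^n}\,G(n+1)$, and hence
$$G(n) - G(n+1) = \frac{(1-q^{mn})-(1-q^n)}{1-q^n}\,G(n+1) = \frac{q^n(1-q^{(m-1)n})}{1-q^n}\,G(n+1).$$
The right-hand side is precisely the general term of the series in the Corollary, once the refined identity is used to rewrite that term's factor $\frac{(q^{mn};q^m)_\infty}{(q^{n+1};q)_{(m-1)n}(q^{mn};q)_\infty}$ as $G(n+1)$. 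I expect this to be the real crux of the argument: the seemingly ad hoc multiplier $\frac{q^n(1-q^{(m-1)n})}{1-q^n}$ is exactly what is needed to convert each summand into the consecutive difference $G(n)-G(n+1)$, and spotting this is the only genuinely creative step.

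With the telescoping in place the rest is routine. Summing over $n\geq 1$ gives
$$\sum_{n=1}^\infty \big(G(n)-G(n+1)\big) = G(1) - \lim_{N\to\infty} G(N+1),$$
where $G(1) = \frac{(q^m;q^m)_\infty}{(q;q)_\infty}$. In the ring of formal power series $G(N+1)\to 1$, since both $(q^{m(N+1)};q^m)_\infty$ and $(q^{N+1};q)_\infty$ tend to $1$ as each fixed coefficient stabilizes, so the sum equals $-1 + \frac{(q^m;q^m)_\infty}{(q;q)_\infty}$, as claimed. The one point I would flag for care is the exclusion of $n=0$: there the multiplier is of the form $0/0$ and $G(0)$ is itself undefined, so the telescope must be anchored at $n=1$, which is exactly why the summation starts at $n=1$ and the free constant is $-1$ rather than $-G(0)$.
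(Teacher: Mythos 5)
Your proposal is correct and follows essentially the paper's own route: the paper likewise obtains the corollary by multiplying the refined identity by $\frac{q^n(1-q^{(m-1)n})}{1-q^n}$ and summing over $n\geq 1$, excluding $n=0$. Your explicit telescoping via $G(n)=\frac{(q^{mn};q^m)_\infty}{(q^n;q)_\infty}$, together with the formal-power-series limit $G(N+1)\to 1$, merely spells out the collapse of the sum (equivalently, the classification of nonempty partitions into parts appearing fewer than $m$ times by their smallest part) that the paper leaves implicit in its one-line derivation.
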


\noindent for which $m=2$ is the original identity (\ref{34Orig}).

Using the part-frequency matrices we can play with this theorem in a variety of ways.  For instance, suppose we relax the restriction that the smallest part be unique to require that it be not divisible by $m$.  Then we can pivot the row in matrix $M_j$ of smallest part $jm^k$ around the $a_{1,jm^k}$ entry, preserving the smallest part, and any entries of size $a_{i,jm^{k+1}}$ are rotated to $a_{0,jm^{k+1+i}}$.  Now the resulting partition has parts which appear fewer than $m$ times, except that multiples of the smallest part by $m^k$ may appear less than $m^2$ times.  Other variations can be explored at leisure.

\phantom{.}

\noindent \textbf{Question 4.} Can other known theorems related to third-order mock theta functions be generalized in this fashion, or the behavior of the relevant partitions related to Glaisher-type maps?

\end{document}